\DeclareMathOperator{\Null}{null}
\newcommand{\lgd}[2]{\left(\frac{#1}{#2}\right)}
\newcommand{\Pmod}[1]{~(\rm{mod}\ {#1})}
\newtheorem{thm}{Theorem}[section]
\newtheorem{cor}[thm]{Corollary}
\newtheorem{lem}[thm]{Lemma}
\newtheorem{prop}[thm]{Proposition}
\theoremstyle{definition}
\newtheorem{defn}[thm]{Definition}
\numberwithin{equation}{section}
\DeclareMathOperator{\coker}{coker} 
 \DeclareMathOperator{\ch}{char}
\DeclareMathOperator{\rk}{rk} \DeclareMathOperator{\rank}{rank}
\newcommand{\set}[1]{\left\{\,#1\,\right\}}
\newcommand{\Set}[2]{\left\{\,{#1}\;\middle|\;{#2}\,\right\}}
\newcommand{\pr}[1]{\left({#1}\right)} 
\newcommand{\map}[1]{\ensuremath\xrightarrow{{#1}}}
\newcommand{\mto}{\ensuremath\mapsto}
\newcommand{\F}{\ensuremath\mathbb{F}}
\newcommand{\Q}{\ensuremath\mathbb{Q}}
\newcommand{\Z}{\ensuremath\mathbb{Z}}
\newcommand{\fa}{\ensuremath\mathfrak{a}}
\newcommand{\fb}{\ensuremath\mathfrak{b}}
\newcommand{\fp}{\ensuremath\mathfrak{p}}
\newcommand{\cA}{\ensuremath\mathcal{A}}
\newcommand{\cB}{\ensuremath\mathfrak{B}}
\newcommand{\sB}{\ensuremath\mathscr{B}}
\newcommand{\sC}{\ensuremath\mathscr{C}}
\newcommand{\sN}{\ensuremath\mathscr{N}}
\newcommand{\inv}{\ensuremath{^{-1}}}
\newcommand{\df}{\ensuremath{:=}}
\newcommand{\mat}[4]{\ensuremath
                     \bigl( \begin{smallmatrix}
                                {#1} & {#2}\\
                                {#3} & {#4}
                             \end{smallmatrix}
                     \bigr)}
\newcommand{\op}{\oplus}
\begin{document}
\title{$8$-rank of the class group and isotropy index}
\author{Qing Lu\thanks{Academy of Mathematics and Systems Science, Chinese Academy of Sciences, Beijing 100190, China; email: \texttt{qlu@amss.ac.cn}.}}
\maketitle
\begin{abstract}
  Suppose $F=\Q(\sqrt{-p_1\dotsm p_t})$ is an imaginary quadratic number field with distinct primes $p_1, \dots, p_{t}$, where $p_i\equiv 1\Pmod{4}$ ($i=1,\dots,t-1$) and $p_t\equiv 3\Pmod{4}$. We express the possible values of the $8$-rank $r_8$ of the class group of $F$ in terms of a quadratic form $Q$ over $\F_2$ which is defined by quartic symbols. In particular, we show that $r_8$ is bounded by the isotropy index of $Q$.
\end{abstract}

\section{Introduction}
Let $t$ be a fixed positive integer. Suppose $F=\Q(\sqrt{-p_1\dotsm p_t})$ is an imaginary quadratic number field with distinct primes $p_1$, \dots, $p_{t}$, where $p_i\equiv 1\Pmod{4}$ ($i=1,\dots,t-1$) and $p_t\equiv 3\Pmod{4}$. We will study the $8$-rank of the class group $\sC$ of $F$.
Recall that the $2^k$-rank of $\sC$ is defined to be $r_{2^k}=\dim_{\F_2} 2^{k-1}\sC/2^k\sC$ for $k\ge 1$.

The study of the 2-primary part of class groups or narrow class groups of quadratic number fields can be traced back to Gauss, who proved that $r_2=t-1$. In a series of papers R\'edei and Reichardt investigate $r_{2^k}$ ($k\ge 2$) and provided an algorithm for $r_4$ by the so-called \emph{R\'edei matrix}~\cite{Redei-Reichardt, Redei1936, Redei1938, Redei1939, Redei1953}. Waterhouse~\cite{MR0319944} found a method to compute $r_8$, which was further generalized to higher R\'edei matrices by Kolster~\cite{MR2296831}. However, their algorithms required determining Hilbert symbols of solutions of Diophantine equations. Yue~\cite{MR2805786} gave more explicit solutions to the $8$-rank problem for the special case $t=2$.

In this paper we study the possible values of $r_8$ for $F=\Q(\sqrt{-p_1\dotsm p_t})$ for an arbitrary $t$. We first compute the Hilbert symbols explicitly (Theorem \ref{thm:Hilbert}). Generalizing a result of Morton~\cite{MR656862}, our main result (Theorem \ref{thm:main}) describes the possible values of the $8$-rank of $\sC$ in terms of a quadratic form over $\F_2$ which is defined by quartic symbols. Consequently, the $8$-rank is bounded by the \emph{isotropy index} (Definition~\ref{defn:isotropy}) of the quadratic form. The proof of the main theorem is given in \S\ref{sec:proof}. 


This paper is motivated by Ye Tian's recent work on the Birch and Swinnerton-Dyer conjecture, in which the 2-primary parts of class groups of quadratic number fields are related to the 2-descent method of elliptic curves~\cite{MR3023667, Tian-arXiv}.
\paragraph{Acknowledgements}  The author thanks Ye Tian for introducing the problem of $8$-rank of class groups of quadratic number fields to her. She is very grateful to Weizhe Zheng for many helpful discussions and constant encouragement. She also thanks the referees for many useful comments. This work was partially supported by China Postdoctoral Science Foundation Grant 2013M541064, National Natural Science Foundation of China Grant 11371043 and the 973 Program Grant 2013CB834202. 

\section{R\'edei matrices revisited}
R\'edei matrices are tools to study the $2$-primary part of class groups of quadratic number fields.  Here we include a review for the case $F=\Q(\sqrt{-p_1\dotsm p_t})$, where $p_i\equiv 1\Pmod{4}$ ($i=1,\dots,t-1$) and $p_t\equiv 3\Pmod{4}$. Instead of the matrix form used in the literature, here we express the R\'edei matrix for the $8$-rank as a bilinear form. In the next section we will show that the quadratic form induced from this bilinear form can be computed from quartic symbols.

For more details about R\'edei matrices, one may refer to the survey by Stevenhagen~\cite{MR1345183}, which also included some results for $\ell$-primary parts where $\ell$ is an odd prime.

In the following, let $\Delta$ be the discriminant of $F$. Then $\Delta=-p_1\dotsm p_t$.
\subsection{The 2-rank of $\sC$}\label{subsec:2-rank}
Let $V=\sC[2]$ be the group of elements of order 2 in $\sC$.

For $i=1,\dots,t$, let $\fp_i$ be the prime ideal of $F$ such that $\fp_i^2=(p_i)$. Then $\fp_1, \dots, \fp_{t}$ generate $V$. The only nontrivial relation in $\sC$ among these elements is
\begin{equation}
  \fp_1\dotsm\fp_t=1.
  \label{eqn:rel_elem}
\end{equation}
We view $V$ as a vector space over $\F_2$, the vector addition being the multiplication of ideal classes. It has a basis $\set{\fp_1,\dots,\fp_{t-1}}$. Then
\[
r_2(\sC) = \dim_{\F_2} V = t-1.
\]

For later use we introduce another vector space $V'$ over $\F_2$ which consists of all positive divisors of $p_1\dotsm p_{t-1}$. The vector addition is defined to be $q_1\cdot q_2=q_1q_2/(\gcd(q_1,q_2))^2$. Then 
\begin{equation}
  V' \to V,\qquad
  p_i \mto \fp_i
  \label{map:VV}
\end{equation}
is an isomorphism of vector spaces. We will identify $V'$ with $V$.
\subsection{The $4$-rank of $\sC$}\label{subsec:$4$-rank}
We will use the quadratic characters to study the $4$-rank of $\sC$.
Note that $r_4(\sC)=\dim_{\F_2}\sC[2]\cap 2\sC$. The elements in $\sC[2]\cap 2\sC$ are exactly the elements in $\sC[2]$ which are killed by all quadratic characters of $\sC$.

For $i=1,\dots,t$, the characters $\chi_{p_i}$ defined by
\[
\chi_{p_i}(\fa)=\lgd{N\fa,\Delta}{p_i}\]
generate $(\sC/2\sC)^\vee$, the group of quadratic characters on $\sC$.
Here $\fa$ is any (fractional) ideal of $F$, and $\lgd{a,b}{p_i}$ is the Hilbert symbol.
The only nontrivial relation among these generators is:
\begin{equation}
  \chi_{p_1}\dotsm\chi_{p_t}=1.
  \label{eqn:rel_char}
\end{equation}

We view $(\sC/2\sC)^\vee$ as a vector space over $\F_2$, the vector addition being the multiplication of quadratic characters.
It has a basis $\set{\chi_{p_1},\dots,\chi_{p_{t-1}}}$ and there is an isomorphism of vector spaces over $\F_2$:
\begin{eqnarray*}
  V=\sC[2] &\map{\cong}& (\sC/2\sC)^\vee,\\
  \fp_i &\mto& \chi_{p_i}.
\end{eqnarray*}

We restrict the quotient map $\sC\to\sC/2\sC$ to $\sC[2]$ and get
\[
 f_\cA\colon V=\sC[2] \to \sC/2\sC \cong V^\vee.
\]
This induces a bilinear form on $V$:
\[
 \cA\colon V\times V \to \F_2.
\]
Under the basis $\set{\fp_1,\dots,\fp_{t-1}}$, we may write $\cA$ in the matrix form:

\begin{defn}[R\'edei matrix]
Let $\xi\colon\set{\pm 1}\to\F_2$ be the group isomorphism defined by $\xi(1)=0$ and $\xi(-1)=1$.
The R\'edei matrix (for the $4$-rank) is defined to be 
\[
M_4=\pr{\xi\lgd{p_i,\Delta}{p_j}}_{1\le i\le t-1,\,1\le j\le t-1}\quad\text{(over $\F_2$)}.
\]
\end{defn}

We need the following result to compute the Hilbert symbol:
\begin{lem}[cf.~{\cite[Chapter V, Theorem 3.6]{MR3058613}}]\label{lem:Hilbert} Let $p\ne 2$ be a prime number. For $a$, $b\in\Q_p^*$, we write 
  \[
  a = p^\alpha a',\quad b= p^\beta b',
  \]
  where $a'$ and $b'$ are units in $\Q_p$. Then 
  \[
  \lgd{a,b}{p}
  = (-1)^{\frac{p-1}{2}\alpha\beta}
  \lgd{a'}{p}^\beta \lgd{b'}{p}^\alpha.
  \]
Here $\lgd{\hphantom{p}}{}$ is the Jacobi symbol.
\end{lem}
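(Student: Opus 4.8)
The plan is to exploit the bilinearity and symmetry of the Hilbert symbol to reduce the computation to a few base cases, then to evaluate those directly. Since $p$ is odd, every element of $\Q_p^*$ is, modulo squares, of the form $p^\epsilon u$ with $\epsilon\in\set{0,1}$ and $u$ a unit, so $\Q_p^*/(\Q_p^*)^2$ is spanned by the classes of $p$ and of the units. Writing $a=p^\alpha a'$ and $b=p^\beta b'$ and expanding $\lgd{a,b}{p}$, viewed as a symmetric $\F_2$-bilinear form written multiplicatively, in each slot gives
\[
\lgd{a,b}{p}=\lgd{p,p}{p}^{\alpha\beta}\lgd{a',p}{p}^{\beta}\lgd{p,b'}{p}^{\alpha}\lgd{a',b'}{p}.
\]
It therefore suffices to evaluate the four symbols on the right.

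Next I would establish the base cases using the solvability criterion that $\lgd{x,y}{p}=1$ precisely when $Z^2=xX^2+yY^2$ has a nontrivial $\Q_p$-point. For two units $a'$, $b'$, the reduction $a'X^2+b'Y^2=Z^2$ is a quadratic form in three variables over $\F_p$, hence has a nontrivial zero by Chevalley--Warning; as the conic is smooth the zero is nonsingular and lifts by Hensel's lemma, so $\lgd{a',b'}{p}=1$. For a unit $u$, a short valuation argument on a primitive solution of $uX^2+pY^2=Z^2$ shows that a solution exists if and only if $u$ is a square modulo $p$, i.e.\ $\lgd{u,p}{p}=\lgd{u}{p}$; this gives $\lgd{a',p}{p}=\lgd{a'}{p}$ and $\lgd{p,b'}{p}=\lgd{b'}{p}$. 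Finally $\lgd{x,-x}{p}=1$ for every $x$ (the point $(1,1,0)$ solves $Z^2=xX^2-xY^2$), so by bilinearity $\lgd{p,p}{p}=\lgd{p,-1}{p}\lgd{p,-p}{p}=\lgd{-1}{p}=(-1)^{(p-1)/2}$.

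Substituting $\lgd{a',b'}{p}=1$, $\lgd{a',p}{p}=\lgd{a'}{p}$, $\lgd{p,b'}{p}=\lgd{b'}{p}$, and $\lgd{p,p}{p}=(-1)^{(p-1)/2}$ into the expansion yields $\lgd{a,b}{p}=(-1)^{\frac{p-1}{2}\alpha\beta}\lgd{a'}{p}^{\beta}\lgd{b'}{p}^{\alpha}$, as claimed. The only genuine content is the solvability of the conic over $\Q_p$ in the two base cases, which rests on Hensel's lemma together with the existence of $\F_p$-points; the bilinear expansion and the identity $\lgd{x,-x}{p}=1$ are purely formal. As this is a standard result I would ultimately just cite it, as the author does, but the route above is the self-contained argument.
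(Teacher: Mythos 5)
The paper never proves this lemma: it is quoted with a citation to a textbook and then used as a black box, so your proposal is being compared against a citation, not an argument. Judged as a self-contained proof, your outline is essentially correct, and all of the base-case computations are right: for units $a'$, $b'$ the conic $a'X^2+b'Y^2=Z^2$ is smooth over $\F_p$ precisely because $p$ is odd and $a'$, $b'$ are units, so Chevalley--Warning plus Hensel gives $\lgd{a',b'}{p}=1$; the valuation argument on a primitive solution of $uX^2+pY^2=Z^2$ gives $\lgd{u,p}{p}=\lgd{u}{p}$; and $\lgd{x,-x}{p}=1$ gives $\lgd{p,p}{p}=\lgd{-1,p}{p}=(-1)^{(p-1)/2}$. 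These computations are exactly where the content of the lemma lives.

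The one claim you should not wave through is that the bilinear expansion is ``purely formal.'' Invariance of $\lgd{a,b}{p}$ under multiplying either entry by a square is formal from the definition, and so is $\lgd{x,-x}{p}=1$; but full bilinearity of the Hilbert symbol over $\Q_p$ is a theorem, not a formality. The norm interpretation gives multiplicativity for free only when one of the factors is a norm; the remaining case (a product of two non-norms is a norm) is exactly the statement that the norm group of a quadratic extension of $\Q_p$ has index $2$, and in the standard elementary development (Serre, \emph{A Course in Arithmetic}, Chapter III) bilinearity for $\Q_p$ is \emph{deduced from} the very formula you are proving. So, depending on where you take bilinearity from, your argument risks circularity. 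The patch is easy and uses only tools you already deployed: since both the symbol and the stated formula depend only on the parities of $\alpha$ and $\beta$, you may assume $\alpha,\beta\in\set{0,1}$ and evaluate the three symbols $\lgd{a',b'}{p}$, $\lgd{a',pb'}{p}$, $\lgd{pa',pb'}{p}$ directly, with no appeal to bilinearity. The first is your Chevalley--Warning case. The second follows from your valuation argument essentially verbatim: a primitive solution of $a'X^2+pb'Y^2=Z^2$ forces $p\nmid X$, so solvability is equivalent to $\lgd{a'}{p}=1$. For the third, a primitive solution of $pa'X^2+pb'Y^2=Z^2$ forces $p\mid Z$; writing $Z=pZ'$ and reducing $a'X^2+b'Y^2=pZ'^2$ modulo $p$ (primitivity gives $p\nmid X$ and $p\nmid Y$) shows solvability is equivalent to $\lgd{-a'b'}{p}=1$, i.e., the symbol equals $(-1)^{(p-1)/2}\lgd{a'}{p}\lgd{b'}{p}$. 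These three cases reassemble into the stated formula, making the proof genuinely self-contained.
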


\begin{prop}
  Under our assumption,
  $M_4$ is a symmetric matrix. In other words, $\cA$ is a symmetric bilinear form.
\end{prop}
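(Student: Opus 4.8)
The claim is equivalent to showing $\lgd{p_i,\Delta}{p_j}=\lgd{p_j,\Delta}{p_i}$ for all $1\le i,j\le t-1$, since $\xi$ is a group isomorphism and hence $M_4$ is symmetric precisely when the $\set{\pm1}$-valued symbols defining its entries are symmetric. The diagonal entries ($i=j$) are trivially symmetric, so the plan is to treat the off-diagonal case $i\ne j$.

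First I would simplify each off-diagonal Hilbert symbol using Lemma~\ref{lem:Hilbert}. Taking $p=p_j$, $a=p_i$ and $b=\Delta$, observe that $p_i$ is a $p_j$-adic unit (because $i\ne j$), while $\Delta=-p_1\dotsm p_t$ has $p_j$-adic valuation exactly $1$. Thus in the notation of the lemma $\alpha=0$ and $\beta=1$, so the leading sign $(-1)^{\frac{p_j-1}{2}\alpha\beta}$ is trivial and the factor $\lgd{b'}{p_j}^\alpha$ disappears. What remains is
\[
\lgd{p_i,\Delta}{p_j}=\lgd{p_i}{p_j},
\]
the Legendre symbol. By the identical computation with the roles of $i$ and $j$ exchanged, $\lgd{p_j,\Delta}{p_i}=\lgd{p_j}{p_i}$.

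It then suffices to check $\lgd{p_i}{p_j}=\lgd{p_j}{p_i}$, and this is where the hypothesis on the primes enters. Quadratic reciprocity gives $\lgd{p_i}{p_j}\lgd{p_j}{p_i}=(-1)^{\frac{p_i-1}{2}\cdot\frac{p_j-1}{2}}$; since both indices satisfy $i,j\le t-1$, we have $p_i\equiv p_j\equiv 1\Pmod{4}$, so each of $\frac{p_i-1}{2}$ and $\frac{p_j-1}{2}$ is even. The exponent therefore vanishes and the two Legendre symbols agree, which completes the argument.

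The computation is essentially routine once Lemma~\ref{lem:Hilbert} is available; the only point that requires care is the role of the exceptional prime $p_t\equiv 3\Pmod{4}$. It never enters the reciprocity step, being excluded from the range $1\le i,j\le t-1$, and although it does appear in $\Delta$ it is absorbed into the unit $b'$, which is annihilated by the exponent $\alpha=0$. Were $p_t$ permitted as a matrix index the symmetry would generally break, so faithfully tracking the congruence condition on the primes is the real crux of the proof rather than the symbol manipulations themselves.
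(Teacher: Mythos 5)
Your proof is correct and follows essentially the same route as the paper: reduce each off-diagonal Hilbert symbol to a Legendre symbol via Lemma~\ref{lem:Hilbert} (noting $\alpha=0$, $\beta=1$), then apply quadratic reciprocity together with $p_i\equiv 1\Pmod{4}$ for $i\le t-1$ to get symmetry. Your version just spells out the valuation bookkeeping and the role of $p_t$ more explicitly than the paper does.
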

\begin{proof}
If $p_i\ne p_j$, we have
\[
\lgd{p_i,\Delta}{p_j}
= \lgd{p_i}{p_j}
\]
by Lemma~\ref{lem:Hilbert}. 

Since $p_i\equiv 1\Pmod{4}$ for $i=1,\dots,t-1$, we see that 
\[
\lgd{p_j}{p_i}=\lgd{p_i}{p_j}(-1)^{\frac{p_i-1}{2}\frac{p_j-1}{2}}=\lgd{p_i}{p_j} \quad\text{for all $i\ne j$}.
\]
\end{proof}

Note that $\ker f_\cA = \sC[2]\cap 2\sC$. Therefore we have the following result:
\begin{prop}[R\'edei]
  The $4$-rank of $\sC$ is
  \[
  r_4=\dim_{\F_2}V - \dim_{\F_2}f_{\cA}(V) = t-1-\rank_{\F_2} M_4.
  \]
\end{prop}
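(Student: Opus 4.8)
The plan is to read off the identity from the rank–nullity theorem applied to the $\F_2$-linear map $f_\cA\colon V\to V^\vee$. Since $\dim_{\F_2}V=t-1$ was established in \S\ref{subsec:2-rank}, rank–nullity gives
\[
\dim_{\F_2}\ker f_\cA = (t-1)-\dim_{\F_2}f_\cA(V),
\]
so it remains to identify the two terms on the right with $r_4$ and $\rank_{\F_2}M_4$ respectively.

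For the image, I would argue that $\rank_{\F_2}M_4=\dim_{\F_2}f_\cA(V)$. By construction $M_4$ is the Gram matrix of the bilinear form $\cA(x,y)=\bigl(f_\cA(x)\bigr)(y)$ in the basis $\set{\fp_1,\dots,\fp_{t-1}}$ of $V$ together with the dual basis of $V^\vee$; since the evaluation pairing $V\times V^\vee\to\F_2$ is perfect, the rank of this Gram matrix equals the dimension of the image of $f_\cA$. For the kernel, I would use the two facts already recorded above: $\ker f_\cA=\sC[2]\cap 2\sC$, because $f_\cA$ is the restriction to $\sC[2]$ of the quotient map $\sC\to\sC/2\sC$ whose kernel is $2\sC$; and $r_4=\dim_{\F_2}(\sC[2]\cap 2\sC)$. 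Substituting both identifications into the displayed equation yields $r_4=(t-1)-\rank_{\F_2}M_4$.

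The one genuinely non-formal input is the equality $r_4=\dim_{\F_2}(\sC[2]\cap 2\sC)$, which is where the main obstacle lies if one does not take it for granted: it is not immediate from the definition $r_4=\dim_{\F_2}2\sC/4\sC$, but follows from the structure theorem for the finite abelian $2$-group $\sC$, under which both sides count the cyclic factors of order at least $4$. Everything else in the argument is the formal bookkeeping of rank–nullity together with the perfectness of the evaluation pairing, so the proof is short once these inputs are in place.
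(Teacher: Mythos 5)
Your proof is correct and follows essentially the same route as the paper: the paper also obtains the result by noting $\ker f_\cA=\sC[2]\cap 2\sC$, taking $r_4=\dim_{\F_2}\sC[2]\cap 2\sC$ as given at the start of \S\ref{subsec:$4$-rank}, and applying rank--nullity to $f_\cA$ with $M_4$ as its Gram matrix. Your only additions are to spell out the standard details the paper leaves implicit (the perfectness of the evaluation pairing and the structure-theorem argument for $r_4=\dim_{\F_2}\sC[2]\cap 2\sC$), which is fine.
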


\subsection{The $8$-rank of $\sC$}\label{subsec:$8$-rank}
We write $V_0=\ker f_\cA=\sC[2]\cap 2\sC$, where $\cA$ is the bilinear form defined in \S\ref{subsec:$4$-rank}.

In \S\ref{subsec:$4$-rank} we have seen that $\cA$ is a symmetric bilinear form. Therefore, there is a natural isomorphism of vector spaces over $\F_2$:
\[
\sC/(\sC[2]+2\sC) = \coker f_\cA \cong V_0^\vee.
\]

We have the following homomorphism of vector spaces over $\F_2$:
\[
f_\cB\colon V_0=\sC[2]\cap 2\sC
\map{\div 2} \sC/\sC[2]
\to \sC/(\sC[2]+2\sC)\cong V_0^\vee.
\]
The first map ``division-by-2'' in $2\sC$ means taking square roots of the ideal classes.
The second map is the natural quotient homomorphism. Since $\ker f_\cB=\sC[2]\cap 4\sC$, we have 
\[
r_8=\dim_{\F_2} V_0-\dim_{\F_2}f_\cB(V_0).
\]

We are going to describe $f_\cB$ more explicitly. 
Let us identify $\sC[2]\cap 2\sC$ and its preimage $V_0'$ in $V'$ under the isomorphism in (\ref{map:VV}), and write both of them as $V_0$.
Consider $\fa\in V_0=\ker \cA=\sC[2]\cap 2\sC$. We will take the square root of $\fa$ as follows:

\begin{lem}[cf.~{\cite[Lemma 5]{MR656862}}]\label{lem:Morton}
   Assume $\fa$ is a proper ideal of the ring of integers of $F=\Q(\sqrt{\Delta})$ such that $N\fa\mid\Delta$, and that the class of $\fa$ in $\sC$ belongs to $2\sC$. For any positive primitive solution $(x,y,z)$ of $x^2=\Delta y^2+4a z^2$ where $a=N\fa$, there exists an ideal $\fb$ for which the class of $\fb^2$ in $\sC$ coincides with the class of $\fa$ in $\sC$ and $N\fb=z$.
\end{lem}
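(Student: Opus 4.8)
The plan is to construct $\fb$ as an ideal-theoretic square root of $(\alpha)\fa^{-1}$. Put $\alpha=\tfrac12(x+y\sqrt{\Delta})$. Since $\Delta\equiv1\Pmod4$, reducing $x^2=\Delta y^2+4az^2$ modulo $4$ gives $x^2\equiv y^2\Pmod4$, hence $x\equiv y\Pmod2$, so $\alpha=\tfrac{x-y}{2}+y\cdot\tfrac{1+\sqrt{\Delta}}{2}$ lies in $\cO_F$, and $N\alpha=\tfrac14(x^2-\Delta y^2)=az^2$. I claim the stronger statement that the principal ideal $(\alpha)$ factors as $(\alpha)=\fa\fb^2$ for an integral ideal $\fb$ with $N\fb=z$. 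Granting this, passing to $\sC$ gives $[\fb]^2=[\fa]^{-1}$; and since $\fa=\prod_{p_i\mid a}\fp_i$ (forced by $N\fa=a\mid\Delta$, as $\fp_i$ is the unique, ramified, prime above $p_i$) satisfies $\fa^2=(a)$, we have $[\fa]^{-1}=[\fa]$. Hence $[\fb]^2=[\fa]$ with $N\fb=z$, which is the assertion.

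To build $\fb$ I would prescribe its valuation at each prime $\mathfrak q$ of $\cO_F$ by $v_{\mathfrak q}(\fb)=\tfrac12\big(v_{\mathfrak q}((\alpha))-v_{\mathfrak q}(\fa)\big)$ and verify that the right-hand side is a nonnegative integer for every $\mathfrak q$; this both defines an integral ideal and forces $(\alpha)=\fa\fb^2$. Here $v_{\mathfrak q}(\fa)$ equals $1$ on the ramified primes dividing $a$ and $0$ elsewhere, and since $N\alpha=az^2$ with $a$ squarefree, the only primes occurring in $(\alpha)$ lie above divisors of $z$ or of $\Delta$, so only those need be analyzed.

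The key computations, organized by a rational prime $q$, all hinge on primitivity of $(x,y,z)$. \emph{If $q\nmid\Delta$ is inert}, then $q\mid z$ would force $x^2\equiv\Delta y^2\Pmod q$, making $\Delta$ a square modulo $q$ unless $q\mid x$ and $q\mid y$; either outcome contradicts inertness or $\gcd(x,y,z)=1$, so $q\nmid z$ and $q$ contributes nothing. \emph{If $q\nmid\Delta$ splits and $q\mid z$}, lift a square root $s$ of $\Delta$ modulo $q^{2v_q(z)}$; primitivity gives $q\nmid xy$, and in $(x-sy)(x+sy)\equiv0\Pmod{q^{2v_q(z)}}$ the two factors are coprime modulo $q$, so the full power $q^{2v_q(z)}$ falls into one of them. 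Translating back, $\{v_{\mathfrak q}(\alpha),v_{\overline{\mathfrak q}}(\alpha)\}=\{2v_q(z),0\}$: both are even, the local $v(\fb)$ are nonnegative integers, and the $q$-part of $N\fb$ is $q^{v_q(z)}$. \emph{If $p\mid\Delta$ is ramified and odd}, then $\overline{\fp}=\fp$ forces $v_{\fp}(\overline{\alpha})=v_{\fp}(\alpha)$, so $2v_{\fp}(\alpha)=v_{\fp}((az^2))=2\big(v_p(a)+2v_p(z)\big)$, whence $v_{\fp}(\fb)=v_p(z)$, contributing $p^{v_p(z)}$. The prime $2$ (unramified, as $\Delta$ is odd) is handled exactly as the split and inert cases, now using $x\equiv y\Pmod2$. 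Multiplying the local contributions gives $N\fb=\prod_q q^{v_q(z)}=z$ and completes the factorization.

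The main obstacle is the parity of the valuations at the split primes: everything rests on showing that the full prime power $q^{2v_q(z)}$ concentrates in a single prime of $\cO_F$ above $q$, so that $v_{\mathfrak q}(\alpha)$ is even and an honest integral square root $\fb$ exists. This is precisely where primitivity of $(x,y,z)$ is indispensable: without it the power of $q$ could split between $\mathfrak q$ and $\overline{\mathfrak q}$, leaving odd valuations and no ideal $\fb$ with $(\alpha)=\fa\fb^2$. The bookkeeping at the ramified primes dividing $a$ and the slightly more delicate $2$-adic analysis are routine once this point is settled.
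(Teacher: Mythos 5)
The paper gives no proof of this lemma at all --- it simply defers to Morton's Lemma 5 via the ``cf.'' citation --- so there is no internal argument to compare against; your write-up is, in substance, the classical proof from that source. Your construction is correct: setting $\alpha=\tfrac12(x+y\sqrt{\Delta})\in\cO_F$ with $N\alpha=az^2$, using primitivity to show the valuations of $(\alpha)$ at split primes concentrate in a single conjugate (hence are even), handling the ramified primes by conjugation-invariance and the prime $2$ separately, and concluding $(\alpha)=\fa\fb^2$ with $N\fb=z$ and $[\fb]^2=[\fa]^{-1}=[\fa]$ (since $\fa^2=(a)$ is principal), is a complete and valid argument, with only routine details (e.g.\ that $q\mid z$ plus primitivity forces $q\nmid xy$) left implicit.
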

The existence of $(x,y,z)$ is ensured by the assumption that the class of $\fa$ in $\sC$ belongs to $2\sC$.

Let $\cB\colon V_0\times V_0 \to \F_2$ be the bilinear form induced from $f_\cB$, i.e., if $\fa\in V_0$, $D\in V_0'\cong V_0$, and
$\fb$ is the square root of $\fa$ as in Lemma~\ref{lem:Morton},
then
\[
\cB(\fa,D) = \xi(\chi_D(\fb)) = \xi\pr{\lgd{N\fb,\Delta}{D}}.
\]

Summarizing the above discussions, we obtain the following two propositions:

\begin{prop}
  With the above notations,
  \[
  \cB(\fa,D)=\xi\pr{\lgd{N\fb,\Delta}{D}}=\xi\pr{\lgd{z,\Delta}{D}}.
  \]
\end{prop}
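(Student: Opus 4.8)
The plan is to obtain the two equalities separately, since they have different sources: the first is built into the construction of $\cB$, and the second is exactly Lemma~\ref{lem:Morton}. For the first equality I would begin from $\cB(\fa,D)=\xi(\chi_D(\fb))$, where $\fb$ is the square root of $\fa$ furnished by Lemma~\ref{lem:Morton}. Under the identifications $V'\cong V\cong(\sC/2\sC)^\vee$, the character attached to $D=\prod_{p_i\mid D}p_i$ factors as $\chi_D=\prod_{p_i\mid D}\chi_{p_i}$, so $\chi_D(\fb)=\prod_{p_i\mid D}\chi_{p_i}(\fb)$. Inserting the defining formula $\chi_{p_i}(\fb)=\lgd{N\fb,\Delta}{p_i}$ and reading $\lgd{\,\cdot\,,\,\cdot\,}{D}$ as the product of local symbols over the primes dividing $D$, I get $\chi_D(\fb)=\lgd{N\fb,\Delta}{D}$, which is the first equality. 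The second then follows at once, since Lemma~\ref{lem:Morton} asserts $N\fb=z$, and substituting $N\fb=z$ into each local factor converts $\lgd{N\fb,\Delta}{D}$ into $\lgd{z,\Delta}{D}$.

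The one point I expect to require care is that $z=N\fb$ need not be coprime to the primes $p_i$ dividing $D$, so I must make sure that $\chi_{p_i}(\fb)=\lgd{N\fb,\Delta}{p_i}$ still holds when $p_i\mid N\fb$. Here the local symbol $\lgd{z,\Delta}{p_i}$ is defined for every $z\in\Q_{p_i}^*$ and is evaluated by Lemma~\ref{lem:Hilbert}: writing $z=p_i^{\alpha}z'$ and $\Delta=p_i\Delta'$ with $z',\Delta'$ units at $p_i$, one finds $\lgd{z,\Delta}{p_i}=(-1)^{\frac{p_i-1}{2}\alpha}\lgd{z'}{p_i}\lgd{\Delta'}{p_i}^{\alpha}$, and since every $p_i\mid D$ satisfies $p_i\equiv 1\Pmod{4}$ the sign is trivial. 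Because $\chi_{p_i}$ is already a genuine character on $\sC$ (its defining formula being class-invariant, as $N(\gamma)=N_{F/\Q}(\gamma)$ is a local norm from the ramified extension $F_{p_i}/\Q_{p_i}$ for principal $(\gamma)$), the value $\chi_{p_i}(\fb)$ is computed by the formula applied to the very representative $\fb$ of Lemma~\ref{lem:Morton}, independently of whether $p_i\mid z$. With this verified, the two equalities combine to give the proposition; the only substantive inputs are Lemma~\ref{lem:Morton} and Lemma~\ref{lem:Hilbert}.
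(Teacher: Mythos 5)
Your proof is correct and takes essentially the same route as the paper, which presents this proposition as a direct summary of the preceding construction: the first equality is built into the definition of $\cB(\fa,D)=\xi(\chi_D(\fb))=\xi\pr{\lgd{N\fb,\Delta}{D}}$ (unpacked, as you do, via $\chi_D=\prod_{p\mid D}\chi_p$ and the defining formula for each $\chi_p$), and the second is exactly the conclusion $N\fb=z$ of Lemma~\ref{lem:Morton}. Your cautionary discussion of the case $p_i\mid z$ is sound but in fact moot: primitivity of $(x,y,z)$ forces $z$ to be coprime to $\Delta$, hence to $D$, as the paper observes later in the proof of Theorem~\ref{thm:Hilbert}.
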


\begin{prop}
The $8$-rank
$r_8 = \dim_{\F_2}V_0 - \dim_{\F_2} f_\cB(V_0) = r_4 - \rank_{\F_2} \cB$.
\end{prop}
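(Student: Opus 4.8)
The first equality is already in hand: applying the rank--nullity theorem to the homomorphism $f_\cB\colon V_0\to V_0^\vee$, whose kernel is $\sC[2]\cap 4\sC$, gives $r_8=\dim_{\F_2}V_0-\dim_{\F_2}f_\cB(V_0)$, exactly as recorded in the paragraph introducing $f_\cB$. The plan is therefore to prove the second equality $\dim_{\F_2}V_0-\dim_{\F_2}f_\cB(V_0)=r_4-\rank_{\F_2}\cB$, which I would reduce to the two separate identities $\dim_{\F_2}V_0=r_4$ and $\dim_{\F_2}f_\cB(V_0)=\rank_{\F_2}\cB$.

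First I would establish $\dim_{\F_2}V_0=r_4$. By construction $V_0=\ker f_\cA=\sC[2]\cap 2\sC$, and when computing the $4$-rank we saw that $r_4=\dim_{\F_2}(\sC[2]\cap 2\sC)$; equivalently, rank--nullity applied to $f_\cA\colon V\to V^\vee$ gives $\dim_{\F_2}\ker f_\cA=\dim_{\F_2}V-\dim_{\F_2}f_\cA(V)=r_4$. Either route makes this identity immediate.

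Next I would identify $\rank_{\F_2}\cB$ with $\dim_{\F_2}f_\cB(V_0)$. By definition $\cB(\fa,D)=\xi(\chi_D(\fb))$ is precisely the value of the functional $f_\cB(\fa)\in V_0^\vee$ at $D$, so $\cB$ is exactly the bilinear form whose associated linear map $V_0\to V_0^\vee$, namely $\fa\mapsto\cB(\fa,-)$, is $f_\cB$. Fixing a basis of $V_0$ and the dual basis of $V_0^\vee$, the matrix of $f_\cB$ is (the transpose of) the Gram matrix of $\cB$; since row rank equals column rank over $\F_2$, the rank of the bilinear form agrees with the rank of the map, giving $\rank_{\F_2}\cB=\dim_{\F_2}f_\cB(V_0)$. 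Combining this with $\dim_{\F_2}V_0=r_4$ yields the second equality.

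The only genuine point requiring care — and the main, albeit mild, obstacle — is that $\cB$ is a \emph{well-defined} bilinear form: that the value $\xi\pr{\lgd{N\fb,\Delta}{D}}$ depends neither on the chosen primitive solution $(x,y,z)$ of $x^2=\Delta y^2+4az^2$ nor on the particular square-root ideal $\fb$ supplied by Lemma~\ref{lem:Morton}, and that it is $\F_2$-linear in each argument. This is secured by the fact that $f_\cB$ is already a homomorphism into $V_0^\vee$ together with the preceding proposition identifying $\cB(\fa,D)$ with $\xi\pr{\lgd{z,\Delta}{D}}$; once those are granted, the present proposition is pure bookkeeping with rank--nullity and the elementary identity between the rank of a bilinear form and the rank of its associated map.
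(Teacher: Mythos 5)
Your proposal is correct and follows essentially the same route as the paper: the proposition there is stated as a summary of the discussion in \S\ref{subsec:$8$-rank}, namely rank--nullity applied to $f_\cB$ (using $\ker f_\cB=\sC[2]\cap 4\sC$), the identity $\dim_{\F_2}V_0=\dim_{\F_2}\ker f_\cA=r_4$, and the fact that $\cB$ is by definition the bilinear form whose associated linear map is $f_\cB$, so $\rank_{\F_2}\cB=\dim_{\F_2}f_\cB(V_0)$. Your reassembly of these three points, including the remark that well-definedness of $\cB$ is carried by $f_\cB$ and Lemma~\ref{lem:Morton}, is exactly the paper's intended argument.
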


\section{The quadratic form $Q_\cB$}\label{sec:Q_B}
\begin{defn}\label{defn:qf} 
  Let $W$ be a vector space over a field $k$. Then a map $Q\colon W\to k$ is called a \emph{quadratic form} if there exists a bilinear form $B$ such that $Q(x)=Q_B(x)\df B(x,x)$.
\end{defn} 
\paragraph{Remark.} 
\begin{enumerate}[(1)]
\item Note that if $\ch k\ne 2$, there is always a symmetric bilinear form $B$ satisfying the condition. When $\ch k=2$, this is no longer true. 
\item By \cite[\S3, no.~4, Proposition~2]{MR2325344}, Definition~\ref{defn:qf} is equivalent to~\cite[\S3, no.~4, D\'efinition~2]{MR2325344}: a quadratic form on $W$ is a map $Q\colon W\to k$ such that 
	\begin{enumerate}[i)]
  		\item $Q(ax)=a^2 Q(x)$ for all $a\in k$ and $x\in W$, and
  		\item the map $(x,y)\mto Q(x+y)-Q(x)-Q(y)$ is a bilinear form.
	\end{enumerate}
\end{enumerate}

\begin{defn}[Quartic symbol] Let $p\equiv 1\Pmod{4}$ be prime number. If $a\in\Z$ is a quadratic residue (mod $p$) and $(a,p)=1$, we define the \emph{(rational) quartic residue symbol} as
  \[
  \lgd{a}{p}_4 = \pm 1 \equiv a^{\frac{p-1}{4}} \Pmod{p}.
  \]
  For $D=p_1\dotsm p_s$ where $p_i\equiv 1\Pmod{4}$ ($i=1$,\dots,$s$) are distinct primes, and $a\in\Z$ satisfying $(a,D)=1$ and $\lgd{a}{p_i}=1$, we define
  \[
  \lgd{a}{D}_4 = \prod_{i=1}^s \lgd{a}{p_i}_4.
  \]
\end{defn}
Then $\lgd{a}{D}_4=1$ if $a$ is congruent to the fourth power of an integer (mod $D$).

\begin{thm}\label{thm:Hilbert}
  Let $Q_\cB$ the quadratic form on $V_0$ defined by $Q_\cB(D)=\cB(D,D)$ (cf.~\S\ref{subsec:$8$-rank}). Then
  \[
  Q_\cB(D) = \xi\pr{\lgd{\Delta/D}{D}_4}.
  \]
\end{thm}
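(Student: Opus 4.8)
The plan is to specialize the bilinear form to the diagonal and push the Hilbert-symbol formula from the previous proposition through the defining Diophantine equation. Since $Q_\cB(D)=\cB(\fa,D)$ where $\fa$ is the ideal corresponding to $D$ under~(\ref{map:VV}), we have $a=N\fa=D$, so the relevant equation is $x^2=\Delta y^2+4Dz^2$, and the earlier proposition gives $Q_\cB(D)=\xi\lgd{z,\Delta}{D}=\xi\prod_{p\mid D}\lgd{z,\Delta}{p}$ for any positive primitive solution $(x,y,z)$.

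First I would analyze this equation locally at each prime $p\mid D$. Since $p\mid\Delta$ and $p\mid D$, reducing mod $p$ forces $p\mid x$; writing $x=px_1$ and cancelling one factor of $p$ yields $px_1^2=(\Delta/p)y^2+4(D/p)z^2$, whence $(\Delta/D)y^2\equiv-4z^2\Pmod{p}$. Because $D\in\ker\cA$, we have $\lgd{D,\Delta}{p}=1$ for every $p\mid D$, and by Lemma~\ref{lem:Hilbert} this equals $\lgd{\Delta/D}{p}$; thus $\Delta/D$ is a nonzero square mod $p$ (which also shows $\lgd{\Delta/D}{D}_4$ is defined). The displayed congruence then gives $p\nmid y$ and, crucially, $p\nmid z$. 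Hence $\ord_p z=0$, and Lemma~\ref{lem:Hilbert} together with $p\equiv1\Pmod 4$ collapses the Hilbert symbol to the Legendre symbol $\lgd{z,\Delta}{p}=\lgd{z}{p}$, so $\lgd{z,\Delta}{D}=\lgd{z}{D}$.

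Next I would compute the quartic symbol from the same congruence. Writing $\Delta/D\equiv-(2z/y)^2\Pmod p$ and raising to the $\tfrac{p-1}{4}$ power gives $\lgd{\Delta/D}{p}_4\equiv(-1)^{\frac{p-1}{4}}(2z/y)^{\frac{p-1}{2}}\Pmod p$, that is, $\lgd{\Delta/D}{p}_4=(-1)^{\frac{p-1}{4}}\lgd{2}{p}\lgd{z}{p}\lgd{y}{p}$. The scalar $(-1)^{\frac{p-1}{4}}\lgd{2}{p}$ equals $1$ for every $p\equiv1\Pmod4$ (check the two cases $p\equiv1,5\Pmod8$), leaving $\lgd{\Delta/D}{p}_4=\lgd{z}{p}\lgd{y}{p}$. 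Taking the product over $p\mid D$ and using the previous paragraph, the theorem reduces to the single identity $\lgd{y}{D}=1$.

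The heart of the argument, and the step I expect to be the main obstacle, is establishing $\lgd{y}{D}=1$ (equivalently, that $y$ is a square mod $D$). Substituting $x=Dx_2$ back into the equation gives the cleaner form $Dx_2^2+Ey^2=4z^2$ with $E\df-\Delta/D>0$. When $y$ is odd I would argue that for each odd prime $q\mid y$ primitivity forces $q\nmid x_2$, so $D\equiv(2z/x_2)^2\Pmod q$ and $\lgd{D}{q}=1$; quadratic reciprocity with $D\equiv1\Pmod4$ then yields $\lgd{y}{D}=\lgd{D}{y}=1$. The delicate case is $y$ even, where $x_2$ is forced even and $z$ odd: a $2$-adic valuation count on $Dx_2^2+Ey^2=4z^2$, using $E\equiv3\Pmod4$, rules out $\ord_2 y=1$ and forces $D\equiv1\Pmod8$, whence $\lgd{2}{D}=1$ and the even part of $y$ contributes trivially while its odd part is handled as before. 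Combining the cases gives $\lgd{y}{D}=1$, and therefore $Q_\cB(D)=\xi\lgd{z}{D}=\xi\lgd{\Delta/D}{D}_4$.
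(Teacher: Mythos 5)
Your route is essentially the paper's: reduce via Lemma~\ref{lem:Morton} to the equation $x^2=\Delta y^2+4Dz^2$, localize at each $p\mid D$ to get $(\Delta/D)y^2\equiv-4z^2\Pmod{p}$, evaluate the constant $(-1)^{\frac{p-1}{4}}\lgd{2}{p}=1$ (the paper phrases this as $\lgd{-4}{p}_4=1$), and reduce the whole theorem to the single identity $\lgd{y}{D}=1$, proved by reciprocity on the odd part of $y$ plus a $2$-adic analysis. Your odd-$y$ argument and your exclusion of $\ord_2 y=1$ coincide with the paper's cases; your observation that membership in $\ker\cA$ makes the quartic symbol well defined is a nice touch the paper leaves implicit.

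However, one step in your even-$y$ case is false: the $2$-adic count does \emph{not} force $D\equiv1\Pmod{8}$ when $\ord_2 y=2$. Run your own computation there: with $y=4y'$ ($y'$ odd) and $x_2=2\tilde x$, the equation $Dx_2^2+Ey^2=4z^2$ becomes $D\tilde x^2+4Ey'^2=z^2$, and reducing mod $8$, using that $E$, $y'$, $z$ are odd,
\[
D\tilde x^2\equiv z^2-4Ey'^2\equiv 1-4\equiv 5\Pmod{8},
\]
so $\tilde x$ is odd and $D\equiv5\Pmod{8}$, i.e.\ $\lgd{2}{D}=-1$ --- the opposite of what you claim. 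The identity $\lgd{y}{D}=1$ still holds in this subcase, but for a different reason: $\lgd{2}{D}$ enters with the even exponent $\ord_2 y=2$, so $\lgd{y}{D}=\lgd{2}{D}^{2}\lgd{y'}{D}=\lgd{y'}{D}=1$ regardless of its value. The congruence $D\equiv1\Pmod{8}$ (hence $\lgd{2}{D}=1$) is available only when $\ord_2 y\ge 3$, where the term $E\,2^{2\ord_2 y-2}y'^2$ is divisible by $16$ and thus $D\tilde x^2\equiv z^2\Pmod{16}$; and it is genuinely needed only when $\ord_2 y$ is odd. This is exactly why the paper splits the even case into $k=2$ and $k\ge3$. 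With that split inserted, your argument is complete; as written, the step ``forces $D\equiv1\Pmod 8$'' fails for every solution with $\ord_2 y=2$.
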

\paragraph{Remark.}
  This generalizes~\cite[Lemma~7]{MR656862}. Note that unlike in~\cite{MR656862}, we do not assume $\lgd{p_i}{p_j}=1$ for $1\le i\ne j<t$.
\begin{proof} 
  When $D=1$, this is trivial. So we may assume $D\ne 1$.

  If $N\fa=D$, let $(x,y,z)$ be a primitive solution for $x^2=\Delta y^2 + 4D z^2$. Then $z=N\fb$ by Lemma~\ref{lem:Morton}.
We also have $D\mid x$.
Suppose $\Delta=D\cdot D'$, $x=Dx'$. Then
\begin{equation}
  D x'^2 = D' y^2 + 4z^2.
  \label{eqn:z}
\end{equation}
If $p\mid D$, then $p\nmid D'$ and hence $p\nmid z$.
We have
\begin{equation}
\chi_D(\fa)=\lgd{N\fb,\Delta}{D}=\prod_{p\mid D}\lgd{N\fb,\Delta}{p}
= \prod_{p\mid D}\lgd{z}{p} = \lgd{z}{D}.
  \label{eqn:product}
\end{equation}
by Lemma~\ref{lem:Hilbert}. 

Equation (\ref{eqn:z}) implies
\[
  D' y^2 + 4z^2 \equiv 0 \Pmod{p}.
\]
Therefore
\[
\lgd{D'}{p}_4 \lgd{y}{p} = \lgd{z}{p}\lgd{-4}{p}_4.
\]
We are going to compute $\lgd{y}{D}$ and $\lgd{-4}{p}_4$.

Write $y=2^k y'$ where $2\nmid y'$. 
  \[
    \lgd{y'}{D}=\prod_{q\mid y'}\lgd{q}{D}
    =\prod_{q\mid y'}\prod_{p\mid D} \lgd{q}{p} 
    =\prod_{q\mid y'}\prod_{p\mid D} \lgd{p}{q}(-1)^{\frac{p-1}{2}\frac{q-1}{2}}=\prod_{q\mid y'}\lgd{D}{q}.
  \]
Equation (\ref{eqn:z}) implies $D x'^2 \equiv 4z^2 \Pmod{q}$ for all prime factors $q$ of $y'$.
Hence $\lgd{D}{q}=1$ and $\lgd{y'}{D}=1$. 

Now let us consider the possible values of $k$. 
\begin{enumerate}[(i)]
  \item 
    If $k=0$, 
    we have
    $\lgd{y}{D}=\lgd{y'}{D}=1$.
  \item 
    Now assume $k>0$. We have
    $D\equiv 1\Pmod{4}$ and
    $D'\equiv 1\Pmod{4}$.
    From
      $D x'^2 = D' 2^{2k} y'^2 + 4z^2$,
    we see that
    if $k>0$, then $2\mid x'$. Since $\gcd(x,y,z)=1$, we know $2\nmid z$. Hence $z^2\equiv 1\Pmod{8}$.
    Write $y=2\tilde y$ and $x'=2\tilde x$. Then
    \begin{equation}\label{eqn:xyz}
    D\tilde x^2 = D'\tilde y^2 + z^2.
    \end{equation}
    \begin{enumerate}[(a)]
      \item If $k=1$, i.e., $2\nmid\tilde y$, then (\ref{eqn:xyz}) implies
	\[
	\tilde x^2 \equiv 1+1 \Pmod{4},
	\]
	which is impossible.  
      \item If $k=2$, we have 
	$\lgd{y}{D} = \lgd{2}{D}^2 \lgd{y'}{D} = \lgd{y'}{D} =1$.
      \item If $k\ge 3$, then $4\mid\tilde y$. It follows (\ref{eqn:xyz}) that $2\nmid\tilde x$ and hence $\tilde x^2\equiv 1\Pmod{8}$. Then (\ref{eqn:xyz}) implies $D\equiv 1\Pmod{8}$. In other words, $D$ has an even number of prime factors $p$ with $p\equiv 5\Pmod{8}$. Note that $\lgd{2}{p}$ is $1$ if $p\equiv 1\Pmod{8}$ and is $-1$ if $p\equiv 5\Pmod{8}$. Therefore 
	$\lgd{y}{D} = \lgd{y}{D'}\cdot \prod_{p\mid D} \lgd{2}{p} = \lgd{y'}{D} = 1$.
    \end{enumerate}
\end{enumerate}
To summarize, $\lgd{y}{D}=1$ in all cases. 

Note that $\lgd{-4}{p}_4=\lgd{-1}{p}_4\lgd{2}{p}$.  We have $\lgd{-1}{p}_4\equiv (-1)^{\frac{p-1}{4}}\Pmod{p}$ and $\lgd{2}{p}=(-1)^{\frac{p^2-1}{8}}$. No matter whether $p\equiv 1$ or $5\Pmod{8}$, we always have $\lgd{-4}{p}_4=1$.

By (\ref{eqn:product}), we have $\lgd{z}{D}=\lgd{D'}{D}_4$.
\end{proof}

\section{Statement of the main theorem}
\label{sec:thm}
\begin{defn}\label{defn:isotropy}
The \emph{isotropy index} of a quadratic form $Q$ on a vector space $W$ 
is defined to be the maximal dimension of $W'$, where $W'$ is a subspace of $W$ and $Q|_{W'}=0$. We denote the isotropy index of $Q$ by $\rho(Q)$.
\end{defn}
For a quadratic form $Q$, we write
      \[
      \sN(Q)=\Set{\Null(B)}
      {\text{ $B\colon W\times W\to k$ is a bilinear form such that }Q_B=Q}.
      \]
Here 
$\Null(B)=\dim W - \rank(B)$ is the \emph{nullity} of $B$.
\begin{thm}
  \label{thm:main}
  \begin{enumerate}[(I)]
    \item Let $Q_\cB$ be the quadratic form on $V_0$ defined by $Q_\cB(D)=\xi\pr{{\lgd{\Delta/D}{D}_4}}$ as in \S\ref{sec:Q_B}. Let $r_8$ be the $8$-rank of the class group of $F=\Q(\sqrt{\Delta})$. Then 
      \[ 
      r_8\in\sN(Q_\cB).
      \]
    \item 
      Let $Q$ be a quadratic form on an $r$-dimensional vector space over $\F_2$ with isotropy index $\rho$. Then
      \begin{equation*}
      \sN(Q) = S(\rho,r) \df \Set{a}{
      \begin{array}{l}
        0\le a\le\rho,\\
        a\equiv r\Pmod{2}\text{ if }\rho=r, \\
        a=1\text{ if }r=2\text{ and }Q\cong X
      \end{array}
      },
      \end{equation*}
      where $X$ is the quadratic form on $\F_2^2$ defined by $X(x_1e_1+x_2e_2)=x_1x_2$ for the standard basis $\set{e_1,e_2}$ of $\F_2^2$.

      In particular, for the quadratic form $Q_\cB$ in (I), we have $\sN(Q_\cB)=S(\rho,r_4)$, where $r_4=\dim_{\F_2} V_0$ is the $4$-rank of the class group of $F=\Q(\sqrt{\Delta})$.
  \end{enumerate}
\end{thm}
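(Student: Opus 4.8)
The proof splits along the two parts, and essentially all of the work is in Part (II).

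Part (I) is a repackaging of the constructions in \S\ref{subsec:$8$-rank}. There we exhibited a specific bilinear form $\cB$ on $V_0$ with $\cB(D,D)=Q_\cB(D)$ and proved $r_8=r_4-\rank_{\F_2}\cB$. Since $\dim_{\F_2}V_0=r_4$ this reads $r_8=\Null(\cB)$, and as $\cB$ is one of the bilinear forms inducing $Q_\cB$, we get $r_8\in\sN(Q_\cB)$; the closing ``in particular'' claim is then Part (II) applied to $Q_\cB$ on the $r_4$-dimensional space $V_0$. The first step toward Part (II) is to reformulate $\sN(Q)$ as a coset problem: two bilinear forms induce the same quadratic form over $\F_2$ exactly when their difference is \emph{alternating} (symmetric with zero diagonal), so if $B_0$ is any fixed form with $Q_{B_0}=Q$ then
\[
\sN(Q)=\Set{r-\rank(B_0+C)}{C\ \text{alternating}}.
\]
Along this coset the polar form $b_Q(x,y)\df Q(x+y)-Q(x)-Q(y)=(B_0+B_0^{t})(x,y)$ is constant and is itself alternating.

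I would then prove the two inclusions $\sN(Q)\subseteq S(\rho,r)$ and $S(\rho,r)\subseteq\sN(Q)$. For the containment in $S(\rho,r)$: for any $B$ in the coset, if $x$ lies in the right radical of $B$ then $Q(x)=B(x,x)=0$, so this radical is totally isotropic for $Q$ and has dimension $\Null(B)$, forcing $\Null(B)\le\rho$ and hence $0\le a\le\rho$. The parity clause is the case $\rho=r$, which happens exactly when $Q\equiv 0$; then every representative $B$ is alternating, so $\rank B$ is even and $\Null(B)\equiv r\Pmod{2}$, while the standard alternating forms realize every even rank up to $r$, giving $\sN(Q)=S(r,r)$. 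Finally the exclusion for $Q\cong X$ is a direct $2\times 2$ computation: when $b_Q$ is nondegenerate on a two-dimensional space but $Q$ vanishes on a basis, the constraints $\mathrm{diag}(B)=0$ and $B+B^{t}=b_Q$ force $\rank B=1$, so $\sN(X)=\{1\}$.

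The substantive half is $S(\rho,r)\subseteq\sN(Q)$ when $\rho<r$ and $Q\not\cong X$, i.e. realizing every nullity $0,1,\dots,\rho$. My plan is to use the orthogonal decomposition of $(W,Q)$ over $\F_2$ into standard blocks (some hyperbolic planes $X$, at most one anisotropic plane, and the radical of $b_Q$, on which $Q$ is linear), together with the observation that for an orthogonal sum $Q=Q_1\perp Q_2$ every block matrix $\bigl(\begin{smallmatrix}B_1 & P\\ P^{t} & B_2\end{smallmatrix}\bigr)$ with $Q_{B_i}=Q_i$ and $P$ arbitrary induces $Q$. Taking $P=0$ already gives $\sN(Q_1)+\sN(Q_2)\subseteq\sN(Q)$, and the free off-diagonal block $P$ supplies the intermediate nullities that the block-diagonal sumset misses. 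Concretely I would (i) realize the maximal nullity $\rho$ by building a $B$ whose right radical is a chosen maximal totally isotropic subspace; (ii) realize nullity $0$ by exhibiting a nonsingular representative, the point being that $Q\cong X$ is the unique configuration where this fails; and (iii) fill in the remaining values by perturbing a representative with a single rank-two alternating form $E_{ij}+E_{ji}$ chosen so as to raise the rank by exactly one, which keeps us in the coset.

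The main obstacle is precisely step (iii) together with its interaction with $X$. Because alternating matrices over $\F_2$ have even rank, no alternating perturbation changes the rank by one on its own: a unit change can arise only from a rank-two perturbation that happens to raise the rank by exactly one, and the systematic failure of this flexibility in dimension $2$ is exactly what pins $\sN(X)=\{1\}$ and forces the single exceptional clause. The delicate points are therefore to show that, away from the sole $Q\cong X$ case, such rank-raising perturbations exist at every intermediate stage, and to check that the radical of $b_Q$ (where $Q$ is only linear) creates no further obstruction; I expect to handle both by an induction on $\dim W$ built on the block decomposition above.
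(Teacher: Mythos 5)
Your Part (I) and the inclusion $\sN(Q)\subseteq S(\rho,r)$ are correct and run along the same lines as the paper: $r_8=\Null(\cB)$ with $\cB$ a representative of $Q_\cB$; the radical of any representative $B$ is totally isotropic for $Q$, so $\Null(B)\le\rho$; the case $\rho=r$ means $Q=0$, forcing $B$ alternating and hence of even rank; and the $2\times 2$ computation gives $\sN(X)=\set{1}$. Your coset reformulation (the representatives of $Q$ form a coset of the alternating forms) is a clean and correct way to organize this easy half.

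The gap is the reverse inclusion $S(\rho,r)\subseteq\sN(Q)$, which is the substantive half of the theorem and occupies essentially all of the paper's proof: a two-dozen-case exhaustion with explicit matrices, glued together by induction via the sumset lemma $\sN(Q_1)+\sN(Q_2)\subseteq\sN(Q_1\op Q_2)$. Your plan names the right tools (the block decomposition, the same sumset lemma, and free off-diagonal blocks $P$), but the two steps that carry the load are stated as expectations rather than proved. Concretely: (a) the claim that a nonsingular representative exists whenever $\rho<r$ and $Q\not\cong X$ is exactly what the paper's explicit matrices for $X\op O_1$, $X\op O_2$, $X\op X$, $X\op X\op O_1$, $X^{\op 3}$, $X\op I$, $Y\op O_1$, etc., establish; these do \emph{not} come from block-diagonal realizations, since for example $\sN(X)+\sN(O_1)=\set{2}$ misses the values $0$ and $1$ of $\sN(X\op O_1)=\set{0,1,2}$, so some explicit construction with nonzero off-diagonal blocks is unavoidable. (b) The assertion that at every intermediate stage one can add an alternating rank-two matrix $E_{ij}+E_{ji}$ raising the rank by exactly one is plausible but unproven, and it is the crux: alternating perturbations generically change the rank by $0$ or $2$, and your own analysis of $X$ shows the needed flexibility can fail outright. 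The sentence ``I expect to handle both by an induction on $\dim W$'' is precisely the missing content --- the base cases of any such induction are the explicit matrices the paper writes down, and without them (or a substitute argument, e.g.\ a parametrization of the coset showing which ranks occur in each orbit) the proof is not complete.
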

The proof will be given in \S\ref{sec:proof}. 
\begin{cor}\label{cor:r_8}
  $r_8\le\rho(Q_\cB)$.  
\end{cor} 
An immediate consequence of Corollary~\ref{cor:r_8} is
\begin{equation}
  r_8\le\log_2 \#Q_\cB\inv(0).
  \label{eqn:log2}
\end{equation}
For an analogue of \eqref{eqn:log2} for certain real quadratic fields, see Fouvry and Kl\"uners~\cite[Theorem 3 (ii)]{MR2679695}. 

\begin{cor}\label{cor:mod}
  If $Q_\cB=0$, then $r_8\equiv r_4\Pmod{2}$.
\end{cor}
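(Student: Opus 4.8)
The plan is to obtain Corollary~\ref{cor:mod} as an immediate specialization of Theorem~\ref{thm:main}, the only genuinely new input being the evaluation of the isotropy index in the degenerate case $Q_\cB=0$.

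First I would observe that if $Q_\cB$ vanishes identically on $V_0$, then \emph{every} subspace $W'\subseteq V_0$ satisfies $Q_\cB|_{W'}=0$; in particular $W'=V_0$ qualifies, so by Definition~\ref{defn:isotropy} the isotropy index is as large as possible:
\[
\rho(Q_\cB)=\dim_{\F_2}V_0=r_4.
\]
Next I would invoke Theorem~\ref{thm:main}: part~(I) gives $r_8\in\sN(Q_\cB)$, while part~(II) identifies $\sN(Q_\cB)=S(\rho,r_4)$ with $\rho=\rho(Q_\cB)$. Combining these with the previous step, $\rho=r_4$, and hence
\[
r_8\in S(r_4,r_4).
\]

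Finally I would read off the defining conditions of $S(\rho,r)$ in the case $\rho=r=r_4$. The clause ``$a\equiv r\Pmod{2}$ if $\rho=r$'' is now active, and it forces every $a\in S(r_4,r_4)$ to satisfy $a\equiv r_4\Pmod{2}$. Applying this to $a=r_8$ yields $r_8\equiv r_4\Pmod{2}$, which is exactly the assertion.

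The only point requiring any care—and it is minor—is the exceptional clause ``$a=1$ if $r=2$ and $Q\cong X$'' in the definition of $S(\rho,r)$. Since $X$ is not the zero form (it takes the value $1$ on $e_1+e_2$) and isomorphisms of quadratic forms preserve the set of attained values, the hypothesis $Q_\cB=0$ precludes $Q_\cB\cong X$; thus this clause is vacuous here and cannot interfere with the parity conclusion, even when $r_4=2$. I therefore anticipate no real obstacle: all the substance of the argument resides in Theorem~\ref{thm:main} itself, and this corollary is simply the reading of that conclusion at the boundary value $\rho=r_4$.
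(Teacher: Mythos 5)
Your proof is correct and follows essentially the same route as the paper: the paper also deduces Corollary~\ref{cor:mod} directly from Theorem~\ref{thm:main}~(I) together with the inclusion $\sN(Q)\subset S(\rho,r)$, with the key observation that $Q_\cB=0$ is exactly the case $\rho=r_4$. The only (harmless) difference is that you invoke the full equality $\sN(Q_\cB)=S(\rho,r_4)$, whereas the paper notes that the easy containment $\sN(Q_\cB)\subset S(\rho,r_4)$ already suffices, since one only needs $r_8\in S(r_4,r_4)$ to read off the parity constraint.
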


Indeed, Corollaries~\ref{cor:r_8} and \ref{cor:mod} follows from Theorem~\ref{thm:main}~(I) and the easy part of (II), i.e., $\sN(Q)\subset S(\rho,r)$.

\section{Interlude: Quadratic forms over $\F_2$}
In this section we will review the classification of quadratic forms over $\F_2$ which will be used in the proof of Theorem~\ref{thm:main}, and calculate the isotropy index (Definition~\ref{defn:isotropy}) in each case. Some of the material (in more general form) can be found in~\cite[Chapitre 1, \S16]{MR0310083}. 

In this section $Q$ is a quadratic form on a finite-dimensional vector space $W$ over $\F_2$.
It induces an alternating bilinear form $\nabla_Q(x,y) \df Q(x+y) - Q(x) - Q(y)$.

If $W'$ is a subspace of $W$, we define ${W'}^\perp=\Set{x\in W}{\nabla_Q(x,y)=0\text{ for all }y\in W'}$.

$\nabla_Q$ induces a nondegenerate alternating bilinear form $\overline{\nabla}_Q$ on $W/W^\perp$. 
Therefore there exists a basis $\set{e_1,\dots,e_k,e_{k+1},\dots,e_{2k}}$ of $W/W^\perp$ so that $\overline{\nabla}_Q$ can be written as
\[
\begin{pmatrix}
    0 & I \\
    I & 0
\end{pmatrix}
.
\]
The restriction of $Q$ to $W^\perp$ is linear.
\begin{defn}
  The \emph{defect} of $Q$ is defined to be
$d=\dim_{\F_2}Q(W^\perp)$.
\end{defn}
By definition, $d$ equals $0$ or $1$.
\begin{defn}
  The \emph{rank} of $Q$ is defined to be
$\rk(Q)=2k+d$, where $2k=\dim W/W^\perp$ as before.
\end{defn}

It follows that the isotropy index $\rho(Q)=\rho(Q_0)+(n-\rk(Q))$, where
$Q_0$ is the (nondegenerate) quadratic form on $W_0\df W/\ker(Q|_{W^\perp})$ induced from $Q$.

\subsection*{Classification of quadratic forms over $\F_2$}

Let $\set{e_1,\dots,e_n}$ be a basis of $\F_2^n$. 
As a shorthand, we introduce the following quadratic forms:
\begin{itemize}
  \item On $\F_2^1$, let $I(x_1e_1)=x_1^2$.
  \item On $\F_2^2$, let $X(x_1e_1+x_2e_2)=x_1x_2$ and
$Y(x_1e_1+x_2e_2)=x_1^2 + x_1x_2 + x_2^2$. 
  \item On $\F_2^n$, let $O_{n}(\sum_{i=1}^n x_ie_i)=0$.
\end{itemize}
\subsubsection*{Type~1: the rank of $Q$ is odd ($\rk(Q)=2k+1$)}
There exists a basis $\set{e_1, \dots, e_{2k+1}, \dots, e_n}$ such that
\[
    Q\pr{\sum_{i=1}^n x_i e_i}
  = \sum_{i=1}^k x_i x_{k+i} + x_{2k+1}^2
  \cong X^{\op k}\op I\op O_{n-2k-1}.
\]
Then 
\[
\rho(Q_0)=k \quad\text{and}\quad
\rho(Q)=k+(n-\rk(Q)).
\]
\subsubsection*{Type~2: the rank of $Q$ is even ($\rk(Q)=2k$)}
There exists a basis $\set{e_1, \dots, e_{2k}, \dots, e_n}$ such that either
\[
    Q\pr{\sum_{i=1}^n x_i e_i}
 =  \sum_{i=1}^k x_i x_{k+i}
 \cong X^{\op k} \op O_{n-2k}
\quad\text{\textbf{(Type~2.1)}}
\]
or
\[
    Q\pr{\sum_{i=1}^n x_i e_i}
    = \sum_{i=1}^{k-1} x_i x_{k+i}
    + x_k^2 + x_k x_{2k} + x_{2k}^2
    \cong X^{\op(k-1)} \op Y \op O_{n-2k}
\quad\text{\textbf{(Type~2.2)}}.
\]
Then
\[
\rho(Q_0) = \begin{cases}
  k & \text{(Type 2.1)}\\
  k-1 & \text{(Type 2.2)}
\end{cases}
\quad\text{and}\quad
\rho(Q)=\rho(Q_0)+(n-\rk(Q)).
\]

For the proof of the classification (and generalization to finite fields of characteristic $2$), one may refer to~\cite[Chapter VIII, \S199]{MR0104735}.

In both Type 1 and Type 2, we have $2\rho(Q)\ge n-2$.
\subsubsection*{How to determine the type of $Q$?}
There is an easy way to determine the type of $Q$.
It can be checked that the cardinality of the preimage of 0 is
\[
\# Q\inv(0) =
\begin{cases}
  2^{n-1} & \text{(Type~1)},\\
  2^{n-1}+2^{\rho-1} & \text{(Type~2.1)},\\
  2^{n-1}-2^\rho & \text{(Type~2.2)}.
\end{cases}
\]
Therefore, the type of $Q$ can be determined by ``vote'':
\[
Q \text{ is of }
\begin{cases}
  \text{Type~1} & \text{if\quad} \# Q\inv(0) = \# Q\inv(1),\\
  \text{Type~2.1} & \text{if\quad} \# Q\inv(0) > \# Q\inv(1),\\
  \text{Type~2.2} & \text{if\quad} \# Q\inv(0) < \# Q\inv(1).
\end{cases}
\]
The \emph{Arf invariant} of $Q$ is 0 if $Q$ is of Type 2.1 and is 1 if $Q$ is of Type 2.2~\cite{MR0008069}. 

Moreover, the isotropy index $\rho$ of $Q$ can be determined from $\# Q\inv(0)$ if $Q$ is of Type~2.

\section{Proof of the main theorem}
\label{sec:proof}
  For (I): from \S\ref{subsec:$8$-rank} and the definition of $Q$ we see $r_8\in\sN(Q_\cB)$.

  For (II): We first show $\sN(Q)\subset S(\rho,r)$.
  Suppose $B$ is a bilinear form which induces the quadratic form $Q_B=Q$. Then $\Null(B)\le\rho(Q)$ by definition.

  If $\rho=r$, i.e., $Q=0$, then $B$ is an alternating bilinear form. Therefore $\rank(B)\equiv 0\Pmod{2}$ and $\Null(B)\equiv r\Pmod{2}$.

  If $r=2$ and $Q\cong X$, we have $\Null(B)=1$.

  Now we show $\sN\supset S(\rho,r)$.
  Suppose $a\in S(\rho,r)$. We are going to show that there is a bilinear form $B$ such that $Q_B=Q$ and $\Null(B)=a$.

  We will adopt the following notation:  if $S_1$, $S_2\subset\Z$, we write
  \[
    S_1+S_2=\Set{s_1+s_2}{s_1\in S_1,s_2\in S_2}.
  \]

  We define
  \[
  \sB(Q)=\Set{B}
  {\text{ $B$ is a bilinear form such that }Q_B=Q}.
  \]
  \begin{lem}
    Let $Q_1$ and $Q_2$ be two quadratic forms. Then
    $\sN(Q_1)+\sN(Q_2)\subset\sN(Q_1\op Q_2)$.
  \end{lem}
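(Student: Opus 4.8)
The plan is to realize a nullity from each summand by an explicit bilinear form and then glue the two forms together block-diagonally on the direct sum. Suppose $Q_1$ is a quadratic form on $W_1$ and $Q_2$ is a quadratic form on $W_2$ (over a field $k$), and let $n_1\in\sN(Q_1)$ and $n_2\in\sN(Q_2)$. By the definition of $\sN$ there exist bilinear forms $B_i\colon W_i\times W_i\to k$ with $Q_{B_i}=Q_i$ and $\Null(B_i)=n_i$ for $i=1,2$; that is, $B_i\in\sB(Q_i)$.

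First I would assemble the block-diagonal bilinear form $B\colon(W_1\op W_2)\times(W_1\op W_2)\to k$ given by
\[
B\pr{(x_1,x_2),(y_1,y_2)}=B_1(x_1,y_1)+B_2(x_2,y_2).
\]
Evaluating on the diagonal shows that $B$ induces $Q_1\op Q_2$:
\[
Q_B(x_1,x_2)=B_1(x_1,x_1)+B_2(x_2,x_2)=Q_1(x_1)+Q_2(x_2)=(Q_1\op Q_2)(x_1,x_2),
\]
so $B\in\sB(Q_1\op Q_2)$.

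The remaining step is the nullity computation. Concatenating a basis of $W_1$ with a basis of $W_2$ yields a basis of $W_1\op W_2$ in which the Gram matrix of $B$ is block-diagonal, with diagonal blocks the Gram matrices of $B_1$ and $B_2$. Hence $\rank(B)=\rank(B_1)+\rank(B_2)$, and therefore
\[
\Null(B)=\dim(W_1\op W_2)-\rank(B)=\Null(B_1)+\Null(B_2)=n_1+n_2.
\]
This gives $n_1+n_2\in\sN(Q_1\op Q_2)$, which is exactly the asserted inclusion $\sN(Q_1)+\sN(Q_2)\subset\sN(Q_1\op Q_2)$.

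There is no genuine obstacle here: the single nontrivial ingredient is additivity of rank under a block-diagonal direct sum, which is elementary linear algebra valid over any field. The one point worth flagging is that the inclusion is generally strict and cannot be promoted to an equality, since $B$ ranges only over block-diagonal forms, and these need not exhaust all bilinear forms inducing $Q_1\op Q_2$; already $Q_1=Q_2=I$ on $\F_2^1$ shows that equality can fail.
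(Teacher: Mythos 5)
Your proof is correct and is precisely the block-diagonal argument that the paper has in mind when it dismisses this lemma with the one-word proof ``Obvious.'' Your added remark on strictness is also accurate: over $\F_2$ one has $\sN(I)+\sN(I)=\set{0}$ while $\sN(I\op I)=\set{0,1}$, consistent with the paper's computation $\sN(I\op O_1)=\set{0,1}$ since $I\op I\cong I\op O_1$.
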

  \begin{proof}
    Obvious.
  \end{proof}

  According to the classification of quadratic forms over $\F_2$, it suffices to prove that $\sN\supset S(\rho,r)$ for the following cases:
  \begin{enumerate}[(I)]
    \item $X^{\op k}\op O_m$ for $k\ge 0$, $m\ge 0$.
    \item $X^{\op k} \op O_m\op Y$ for $k\ge 0$, $m\ge 0$.
    \item $X^{\op k} \op O_m\op I$ for $k\ge 0$, $m\ge 0$.
  \end{enumerate}
  In all cases, the isotropy index $\rho$ is equal to $k+m$.

Now we will check the theorem by exhausting all the cases.
  \begin{enumerate}[(i)]
    \item $\rho(O_1)=1$. We have $\sN(O_1)=\set{1}$.
    \item $\rho(O_2)=2$. Since $\sB(O_2)=\set{\mat{0}{0}{0}{0},\mat{0}{1}{1}{0}}$, we get  $\sN(O_2)=\set{0,2}$.
    \item We will show that $\sN({O_m})=\Set{a}{0\le a\le m,\,a\equiv m\Pmod{2}}$ for $m>2$ by induction. In fact,
      \begin{eqnarray*}
        \sN(O_m) &\supset& \sN(O_{m-2})+\sN(O_2) \\
	&=& \Set{a}{0\le a\le m-2,\,a\equiv m\Pmod{2}}+\set{0,2}\\
	&=& \Set{a}{0\le a\le m,\,a\equiv m\Pmod{2}}.
      \end{eqnarray*}
    \item $\rho(X)=1$. We have $\sB(X)=\set{\mat{0}{1}{0}{0},\mat{0}{0}{1}{0}}$. Therefore $\sN(X)=\set{1}$.
    \item $\rho(X\op O_1)=2$. We have
      \[
        \sN(X\op O_1)\supset\sN(X)+\sN(O_1)=\set{2}.
      \]
      Since
      \[
      \begin{pmatrix}
	0 & 1 & 1\\
	0 & 0 & 1\\
	1 & 1 & 0
      \end{pmatrix},\;
      \begin{pmatrix}
	0 & 1 & 1\\
	0 & 0 & 0\\
	1 & 0 & 0
      \end{pmatrix}
      \in\sB(X\op O_1)
      \]
      and have nullity 0 and 1 respectively,
      we have $\sN(X\op O_1)=\set{0,1,2}$.
    \item
      $\rho(X\op O_2)=3$. On the other hand,
      \begin{eqnarray*}
       \sN(X\op O_2)
      &=&  \sN((X\op O_1)\op O_1)\\
      &\supset& \sN(X\op O_1)+\sN(O_1) \\
      &=& \set{0,1,2}+\set{1}
      =\set{1,2,3}.
      \end{eqnarray*}
      It can be checked that
      \[
      \begin{pmatrix}
	0 & 1 & 1 & 1\\
	0 & 0 & 0 & 1\\
	1 & 0 & 0 & 0\\
	1 & 1 & 0 & 0
      \end{pmatrix}
      \in\sB({X\op O_2})
      \]
      and has nullity 0.
      Therefore, $\sN(X\op O_2)=\set{0,1,2,3}$.
    \item We will show that $\sN(X\op O_m)=\set{0,1,2,\dots,m+1}$ for $m\ge 3$ by induction. In fact,
      \begin{eqnarray*}
        \sN(X\op O_m)
        &=& \sN((X\op O_{m-2})\op O_2) \\
        &\supset& \sN(X\op O_{m-2}) + \sN(O_2)\\
	&=& \set{0,1,\dots,m-1} + \set{0,2}\\
	&=& \set{0,1,\dots,m+1}.
      \end{eqnarray*}
    \item
      $\rho(X\op X)=2$. We have
      $\sN(X\op X)\supset\sN(X)+\sN(X)=\set{2}$.
      Since
      \[
      \begin{pmatrix}
	0 & 1 & 1 & 1\\
	0 & 0 & 0 & 1\\
	1 & 0 & 0 & 1\\
	1 & 1 & 0 & 0
      \end{pmatrix},\;
      \begin{pmatrix}
	0 & 1 & 0 & 1\\
	0 & 0 & 0 & 0\\
	0 & 0 & 0 & 1\\
	1 & 0 & 0 & 0
      \end{pmatrix}\in\sB(X\op X)
      \]
      and have nullity 0 and 1 respectively, we have
      $\sN(X,X)=\set{0,1,2}$.
    \item
      $\rho(X\op X\op O_1)=3$. On the other hand,
      $\sN(X\op X\op O_1)\supset\sN(X\op X)+\sN(O_1)=\set{0,1,2}+\set{1}=\set{1,2,3}$. Note that
      \[
      \begin{pmatrix}
	0 & 1 & 0 & 1 & 1\\
	0 & 0 & 0 & 0 & 1\\
	0 & 0 & 0 & 1 & 0\\
	1 & 0 & 0 & 0 & 1\\
	1 & 1 & 0 & 1 & 0
      \end{pmatrix}\in\sB(X\op X\op O_1)
      \]
      and has nullity 0. It follows that
      $\sN(X\op X\op O_1)=\set{0,1,2,3}$.
    \item We will show that the theorem holds for all $X\op X\op O_m$ for $m\ge 2$ by induction. In fact, we have
      $\sN(X\op X\op O_m)\supset\sN(X\op X\op O_{m-2})+\sN(O_2)=\set{0,1,2,\dots,m}+\set{0,2}=\set{0,1,2,\dots,m+2}$ for $m>2$.
    \item
      $\rho(X\op X\op X)=3$. We have
      $\sN(X\op X\op X)\supset\sN(X\op X)+\sN(X)=\set{0,1,2}+\set{1}=\set{1,2,3}$. Besides,
      \[
      \begin{pmatrix}
	0 & 1 & 1 & 0 & 0 & 0\\
	0 & 0 & 1 & 1 & 0 & 0\\
	1 & 1 & 0 & 1 & 0 & 1\\
	0 & 1 & 0 & 0 & 1 & 0\\
	0 & 0 & 0 & 1 & 0 & 1\\
	0 & 0 & 1 & 0 & 0 & 0
      \end{pmatrix}\in\sB(X\op X\op X)
      \]
      and has nullity 0. Therefore
      $\sN(X\op X\op X)=\set{0,1,2,3}$.
    \item The theorem holds for all $X^{\op k}\op O_m$ by induction for $k\ge 3$ and $k+m\ge 4$, because
      \begin{eqnarray*}
	\sN(X^{\op k}\op O_m)
	&\supset& \sN(X^{\op (k-2)} \op O_m) + \sN(X\op X)\\
      &=&  \set{0,1,2,\dots,k-2+m} + \set{0,1,2}\\
      &=&  \set{0,1,2,\dots,k+m}.
      \end{eqnarray*}
    \item $\rho(Y)=0$. On the other hand, $\sB(Y)=\set{\mat{1}{1}{0}{1},\mat{1}{0}{1}{1}}$ and $\sN(Y)=\set{0}$.
    \item
      $\rho(Y\op O_1)=1$. Since
      \[
      \begin{pmatrix}
	1 & 1 & 0\\
	0 & 1 & 0\\
	0 & 0 & 0
      \end{pmatrix},\;
      \begin{pmatrix}
	1 & 1 & 1\\
	0 & 1 & 0\\
	1 & 0 & 0
      \end{pmatrix}
      \in\sB(Y\op O_1)
      \]
      and have nullity 1 and 0 respectively, we get
      $\sN({Y\op O_1})=\set{0,1}$.
    \item
      $\rho(Y\op O_2)=2$. On the other hand,
      $\sN(Y\op O_2)\supset\sN(Y)+\sN(O_2)=\set{0}+\set{0,2}=\set{0,2}$ and
      $\sN(Y\op O_2)\supset\sN(Y\op O_1)+\sN(O_1)=\set{0,1}+\set{1}=\set{1,2}$.
      Therefore, $\sN(Y\op O_2)=\set{0,1,2}$.
    \item We prove for all $Y\op O_m$ ($m\ge 3$) by induction:
      \begin{eqnarray*}
      \sN(Y\op O_m) &\supset& \sN(Y\op O_{m-2})+\sN(O_2)\\
      &=& \set{0,1,\dots,m-2}+\set{0,2}=\set{0,1,\dots,m}.
      \end{eqnarray*}
    \item
      $\rho(X\op Y)=1$. Since
      \[
      \begin{pmatrix}
	0 & 1 & 0 & 0\\
	0 & 0 & 0 & 0\\
	0 & 0 & 1 & 1\\
	0 & 0 & 0 & 1
      \end{pmatrix},\;
      \begin{pmatrix}
	0 & 1 & 1 & 1\\
	0 & 0 & 1 & 0\\
	1 & 1 & 1 & 1\\
	1 & 0 & 0 & 1
      \end{pmatrix}
      \in\sB(X\op Y)
      \]
      and have nullity 0 and 1 respectively, we know that
      $\sN(X\op Y)=\set{0,1}$.
    \item $\rho(X^{\op k}\op Y\op O_m)=k+m$. For $k\ge 1$ and $k+m\ge 2$, we have 
      $\sN(X^{\op k}\op Y\op O_m)\supset\sN(X^{\op k}\op O_m)+\sN(Y)=\set{0,1,\dots,k+m}$.
    \item $\rho(I)=0$. We also have $\sB(I)=\set{(1)}$ and hence $\sN(I)=\set{0}$.
    \item $\rho(I\op O_1)=1$. In this case, $r_4=2$. We see that
      \[
      \sB(I\op O_1) = \set{\mat{1}{0}{0}{0},\mat{1}{1}{1}{0}}.
      \]
      Therefore, $\sN(I\op O_1)=\set{0,1}$.
    \item $\rho(I\op O_2)=2$. We have $\sN(I\op O_2)\supset\sN(I)+\sN(O_2)=\set{0}+\set{0,2}$ and $\sN(I\op O_2)\supset\sN(I\op O_1)+\sN(O_1)=\set{0,1}+\set{1}=\set{1,2}$.
      Therefore $\sN(I\op O_2)=\set{0,1,2}$.
    \item $\rho(I\op O_m)=m$. We will show that $\sN(I\op O_m)=\set{0,1,\dots,m}$ for $m>2$ by induction. In fact,
      \begin{eqnarray*}
	\sN(I\op O_m) &\supset& \sN(I\op O_{m-2})+\sN(O_2)\\
	&=& \set{0,1,\dots,m-2}+\set{0,2} = \set{0,\dots,m}.
      \end{eqnarray*}
    \item $\rho(X\op I)=1$. We also have $\sN(X\op I)\supset\sN(X)+\sN(I)=\set{1}$. Since
      \[
       \begin{pmatrix}
	 0 & 1 & 1\\
	 0 & 0 & 1\\
	 1 & 1 & 1
       \end{pmatrix}\in\sB(X\op I)
      \]
      and has nullity $0$, we get $\sN(X\op I)=\set{0,1}$.
    \item $\rho(X^{\op k}\op I\op O_m)=k+m$. For $k\ge 1$ and $k+m\ge 2$, we have
      $\sN(X^{\op k}\op I\op O_m)\supset\sN(X^{\op k}\op O_m)+\sN(I)=\set{0,1,\dots,k+m}$.
  \end{enumerate}
  All cases exhausted, we conclude that Theorem~\ref{thm:main} is true.

\paragraph{Remark.} 
Theorem~\ref{thm:main} states that the $8$-rank $r_8\in \sN(Q_\cB)$. 
If $r_4=1$, we have $Q_\cB\cong O_1$ or $I$. Since $\sN(O_1)=\set{1}$ and $\sN(I)=\set{0}$, $r_8$ is determined by $Q_\cB$. However, in general $r_8$ may not be determined by $Q_\cB$. 
For example, if $r_4=2$, $Q_\cB\cong X$ or $Y$ or $O_2$ or $I\op O_1$. We know from Theorem~\ref{thm:main} that $\sN(X)=\set{1}$, $\sN(Y)=\set{0}$, $\sN(O_2)=\set{0,2}$ and $\sN(I\op O_1)=\set{0,1}$. In the latter two cases we have $\#\sN(Q_\cB)>1$. 
In all cases,  all values in $\sN(Q_\cB)$ actually appear as the $8$-ranks of the class groups of infinitely many imaginary quadratic number fields (cf.~the proof of Theorem~2 in~\cite{MR656862}).

\addcontentsline{toc}{section}{Bibliography}
\bibliography{8-rank}
\bibliographystyle{abbrv}
\end{document}